\documentclass[12pt]{amsart}
\usepackage{amsmath}
\usepackage{graphicx}
\usepackage{hyperref}
\usepackage{mathtools}
\usepackage{amsfonts}
\usepackage{amssymb}
\usepackage[T1]{fontenc}
\usepackage{amsthm}
\usepackage{fullpage}
\usepackage{enumitem}
\usepackage[T1]{fontenc}
\usepackage[utf8]{inputenc}
\usepackage{bm}

\newtheorem{theorem}{Theorem}[section]

\newtheorem{lemma}[theorem]{Lemma}

\theoremstyle{definition}

\theoremstyle{remark}
\newtheorem*{remark}{Remark}

\numberwithin{equation}{section}

\title{On Consecutive Non-primitive Elements over Finite Fields}

\keywords{Finite fields, primitive elements, non-square non-primitive elements. }
\subjclass[2020]{11T30, 11T23,}

\author{Bidushi Sharma}
\address{Department of Mathematical Sciences, Tezpur University, Tezpur, Assam, 784028, India}
\email{msp22102@tezu.ac.in}
\author{DHIREN KUMAR BASNET*}
\address{Department of Mathematical Sciences, Tezpur University, Tezpur, Assam, 784028, India}
\email{dbasnet@tezu.ernet.in}

\begin{document}
	\begin{abstract}
Let $q$ be an even prime power and define
\[
\theta_q=\frac{\varphi(q-1)}{q-1}.
\]
In this article, we establish  a bound on $\theta_q$ that guarantees the existence of a pair of consecutive non-primitive elements in $\mathbb{F}_q$, with the exceptions $q=4$ and $q=8$. We first derive a sufficient condition for the existence of such a pair using character sums and then obtain the stated bound by considering several cases according to the least prime divisor of $q-1$.
\end{abstract}
	\maketitle
	
	\section{Introduction}
  Let $q$ be a prime power and define
$\theta_q=\dfrac{\phi(q-1)}{q-1}$,
where $\phi$ denotes Euler's totient function. When $q$ is odd, the set of non-primitive elements can be divided into two disjoint sets, namely, the set of nonzero squares and the set of nonzero non-squares. In \cite{co}, Cohen showed that if $\theta_q < \frac{1}{3}$, or if $\theta_q = \frac{1}{3}$ and $q \notin \{7,13,19,25,37\}$, then the finite field $\mathbb{F}_q$ contains a pair of consecutive elements that are both non-square and non-primitive. This result was originally established for prime fields by Gun, Ramakrishnan, Sahu, and Thangadurai \cite{thg}, who proved that if $\theta_p < \frac{1}{6}$, then $\mathbb{F}_p$ contains a pair of consecutive elements that are both non-square and non-primitive. Jarso and Trudgian \cite{tru} attempted to improve this result by obtaining the bound $\theta_p \leq \frac{1}{4}$ using a sieving method.

    When $q$ is even, every element of $\mathbb{F}_q^*$ is a square. Thus, the elements of $\mathbb{F}_q^*$ can be partitioned into two disjoint sets, the set of primitive elements and the set of non-primitive elements. If $q-1$ is prime, then every non-identity element of $\mathbb{F}_q^*$ is primitive. However, if $q-1$ is composite, then there is at least one non-identity non-primitive element in $\mathbb{F}_q^*$. This leads us to the question of whether there exists a tuple in $\mathbb{F}_q$ consisting of consecutive non-primitive elements. Since the characteristic of $\mathbb{F}_q$ is $2$, any such tuple repeats its elements after an interval of $2$ and therefore has the form
\[
(a,b,a,b,\ldots),
\]
where $a$ and $b$ are non-primitive elements. Thus, the study of such tuples reduces to the study of the existence of a pair of consecutive non-primitive elements, which we shall refer to as an NP pair. This problem was also posed by Cohen in \cite{co}, where he asked whether there exists a bound on $\theta_q$, analogous to the bound in the odd characteristic case, that guarantees the existence of an NP pair. Another observation is that whenever such a pair exists, there must be at least two NP pairs, since $(a,b)$ and $(b,a)$ are both pairs of consecutive non-primitive elements. 

For even $q$, if $\theta_q=1$ or $\theta_q=\frac{q-2}{q-1}$, which occurs only when $q=2$ or $q-1$ is prime, respectively, then every nonzero element of $\mathbb{F}_q^*$ is primitive. Thus, no nonzero non-primitive elements exist in these cases, showing that some restriction on $\theta_q$ is necessary for the existence of an NP pair. Additionally, since $\theta_q$ represents the proportion of primitive elements in $\mathbb{F}_q^*$, $1-\theta_q$ represents the proportion of non-primitive elements in $\mathbb{F}_q^*$. Thus, $\theta_q$ can also be interpreted in terms of the relative abundance of primitive and non-primitive elements. For example, if $\theta_q<\frac{2}{3}$, then the proportion of non-primitive elements exceeds $\frac{1}{3}$, and hence the number of non-primitive elements is more than half the number of primitive elements.

Our main results are summarized in the following theorem.

\begin{theorem}\label{1.1}
Let $q$ be an even prime power and define
\[
\theta_q=\frac{\phi(q-1)}{q-1}.
\]
If $\theta_q<\frac{8}{9}$ then $\mathbb{F}_q$ contains a pair of consecutive non primitive elements except when $q=4, 8.$
\end{theorem}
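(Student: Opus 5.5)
The plan is to count NP pairs directly with character sums and then make the resulting sufficient condition explicit using the least prime divisor of $q-1$. Let $\rho$ denote the indicator function of primitive elements of $\mathbb{F}_q^*$. Extend it by $\rho(0)=0$, so that $0$ counts as non-primitive. The number of NP pairs is then
\[
N=\sum_{a\in\mathbb{F}_q}(1-\rho(a))(1-\rho(a+1)) = q-2\varphi(q-1)+M,
\]
where $M=\#\{a:\ a,\ a+1 \text{ both primitive}\}$; this uses that $a\mapsto a+1$ is a bijection of $\mathbb{F}_q$. Vinogradov's formula expresses $\rho$ through multiplicative characters:
\[
\rho(a)=\theta_q\sum_{d\mid q-1}\frac{\mu(d)}{\varphi(d)}\sum_{\operatorname{ord}\chi=d}\chi(a).
\]
Substituting this for both factors, the principal term gives $\theta_q^2(q-2)$. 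For the other terms, the Weil bound $|\sum_a\chi_1(a)\chi_2(a+1)|\le\sqrt q$ applies whenever $\chi_1,\chi_2$ are not both trivial; when exactly one of them is trivial the sum equals $-1$. This yields
\[
|M-\theta_q^2(q-2)|\le \theta_q^2\,(W^2-1)\sqrt q+O(\theta_q^2W),
\]
where $W=2^{\omega(q-1)}$. Combining the two displays, $N>0$ holds as soon as roughly
\[
(1-\theta_q)^2\sqrt q>\theta_q^2W^2 ,\qquad\text{e.g.}\quad \sqrt q>\Big(\tfrac{\theta_q}{1-\theta_q}\Big)^2W^2 .
\]
This is the sufficient condition. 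Independently, a pigeonhole remark disposes of $\theta_q<\tfrac12$ at once. The sets $\{a,a+1\}$ partition $\mathbb{F}_q$ into $q/2$ pairs, and $\mathbb{F}_q$ contains $q-\varphi(q-1)>q/2$ non-primitive elements, so some pair consists of two non-primitive elements.

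Next I would split according to the least prime divisor $p$ of $q-1=2^n-1$. This $p$ is odd, and in fact $p\neq 5$, since $5\mid 2^n-1$ forces $4\mid n$ and hence $3\mid 2^n-1$. Since $\theta_q\le 1-1/p$, the hypothesis $\theta_q<8/9$ is automatic when $p\le 7$. When $p\ge 11$, however, the product $\prod_{\ell\mid q-1}(1-1/\ell)$ can only fall below $8/9$ if $\omega(q-1)$ is large. This gives a lower bound on $q$ in terms of $\omega$, which should beat $W^2=4^{\omega}$ easily. So the cases $p\ge 11$ should reduce to a numerical check.

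The real work is the cases $p=3$ and $p=7$, where $\theta_q$ can be as large as $2/3$ or $6/7$. Here $(\theta_q/(1-\theta_q))^2$ may be as big as $36$, and the crude factor $4^{\omega}$ is too costly. To handle this I would apply the Cohen-style prime sieve. Write $q-1=k\,p_1\cdots p_s$, where the "core" $k$ contains the small primes and $p_1,\dots,p_s$ are the large ones. Counting pairs in which $a$ and $a+1$ are $k$-free with respect to the sieving primes gives a criterion of the shape
\[
\sqrt q>\Big(\frac{s-1}{\delta}+2\Big)\cdot(\text{factor})\cdot W(k)^2,\qquad \delta=1-\sum_i\frac{2}{p_i},
\]
adapted to non-primitivity. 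Combining this with the bound $\omega(q-1)\le c\log q$ should leave only finitely many $n$ to check. I expect this sieving step, needed to get all the way up to $8/9$ when $p=7$, to be the main obstacle.

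Finally, for the remaining small $n$ I would verify directly, by computer, that an NP pair exists. This check should confirm that $q=4$ and $q=8$ are genuine exceptions: there $q-1=3$ and $q-1=7$ are prime, so $\theta_q=2/3$ and $6/7$ respectively, and $0$ is the only non-primitive element.
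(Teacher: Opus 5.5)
Your counting uses the wrong convention. With $\rho(0)=0$ you treat $0$ as non-primitive, and $1$ is non-primitive whenever $q>2$, so your sum $N$ always contains the pairs $(0,1)$ and $(1,0)$. Hence $N>0$ holds for every $q\ge 4$ and proves nothing. Your pigeonhole remark is vacuous for the same reason: the block $\{0,1\}$ always consists of two non-primitive elements. Under your convention $q=4,8$ would not even be exceptions. Your explanation of them is also off: $1$, not $0$, is the non-primitive element there. What makes them exceptional is that $1$ is the only nonzero non-primitive element and its partner is $0$, which the paper does not count. You must sum over $a\notin\{0,1\}$, i.e.\ show $N>2$. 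The pigeonhole then needs $\varphi(q-1)<q/2-1$, which still follows from $\theta_q<\tfrac12$ because $\varphi(q-1)$ is even and $q/2-1$ is odd. This part is easy to repair.

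The real gap is that the theorem is not proved for $p=3$ and $p=7$, where nearly all its content lies. The sieve criterion is only stated ``of the shape'' and you leave it as the main obstacle. A bound $\omega(q-1)\le c\log q$ gives only $4^{\omega(q-1)}\le q^{c\log 4}$, which helps only if $c<1/\log 16$, so you would need something like $2^{\omega(n)}\le C_t n^{1/t}$.

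Even $p\ge 11$ is not a routine check with your criterion. If $q-1$ has exactly the prime factors $23,31,47,71,89$, then $\theta_q\approx 0.883$. Your criterion then needs $\sqrt q>(\theta_q/(1-\theta_q))^2\,4^5$, i.e.\ $q$ of order $3\cdot 10^9$, while the product of the primes is only about $2.1\cdot 10^8$.

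All of this comes from the factor $4^{\omega(q-1)}$ in Vinogradov's formula. The missing idea is that you never need to detect primitivity: for a single prime $\ell\mid q-1$, every nontrivial $\ell$th power is non-primitive. Using only a character of order $\ell$, the paper gets $\ell^2\#M_\ell(q)=q-2\ell+C_3$ with $|C_3|\le(\ell-1)^2\sqrt q$. So consecutive nontrivial $\ell$th powers exist once $q>9\ell^4$, independently of $\omega(q-1)$. Now take $\ell$ to be the least prime divisor of $q-1$:
\begin{itemize}
\item for $\ell=3$, only $q=16,64,256$ remain to be checked;
\item for $\ell=7$, only $q=512$ remains;
\item otherwise $\ell\ge 23$, and $\theta_q<8/9$ forces at least five prime factors, all $\ge\ell$, so $q>\ell^5>9\ell^4$ automatically.
\end{itemize}
The one failure, $\mathbb{F}_{16}$, is settled by the pair $\alpha^5,\ \alpha^{10}=\alpha^5+1$ of elements of order $3$. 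No sieve is needed.
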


\begin{remark}
In due course, it becomes evident that $\theta_q=\frac{2}{3}$ and $\theta_q=\frac{6}{7}$ if and only if $q=4$ and $q=8$, respectively. In both cases, no NP pair exists. Also, one might naturally ask what happens when $\theta_q=\frac{8}{9}$. However, it is readily seen that there is no prime power $q$ for which $\theta_q=\frac{8}{9}$.
\end{remark}

We have another important theorem that plays a key role in the proof of Theorem~\ref{1.1}. For any prime divisor $\ell$ of $q-1$, the elements $0$ and $1$ are always trivial $\ell$th powers. We shall therefore refer to the elements of $\mathbb{F}_q\setminus\{0,1\}$ that are $\ell$th powers as non-trivial $\ell$th powers, abbreviated as \emph{NT$\ell$}. Accordingly, a pair of consecutive non-trivial $\ell$th powers will be called an \emph{NT$\ell$ pair}. Since $0$ is neither primitive nor non-primitive, every NT$\ell$ pair is necessarily an NP pair.

\begin{theorem}\label{T1.2}
Let $q$ be an even prime power and let $\ell$ be the least prime divisor of $q-1$. If $\theta_q<\frac{8}{9}$ then $\mathbb{F}_q$ contains an $\mathrm{NT}\ell$ pair, except when $q=4,8,16$.
\end{theorem}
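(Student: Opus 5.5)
Let $\ell$ be the least prime divisor of $q-1$. Since $q$ is even, $q-1$ is odd, so $\ell\ge 3$. The plan is to count pairs $(x,x+1)$ with $x,x+1\in\mathbb{F}_q\setminus\{0,1\}$ both $\ell$th powers, using multiplicative characters of order $\ell$. Let $\chi$ be a character of order $\ell$. The indicator of being a nonzero $\ell$th power is $\frac1\ell\sum_{j=0}^{\ell-1}\chi^j(x)$. Set
\[
N=\frac{1}{\ell^2}\sum_{x\ne 0,1}\sum_{i,j=0}^{\ell-1}\chi^i(x)\chi^j(x+1).
\]
Note that in characteristic $2$ the values $x=0,1$ correspond exactly to $x+1=1,0$, so excluding them removes all trivial pairs.

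The term $i=j=0$ contributes $q-2$. Terms with exactly one of $i,j$ nonzero are complete character sums with one or two points removed, so each has absolute value at most $1$. Terms with $i,j$ both nonzero are Jacobi sums, of absolute value $\sqrt q$. This gives
\[
\ell^2 N\ \ge\ q-2-2(\ell-1)-(\ell-1)^2\sqrt q,
\]
so $N>0$ as soon as $q-2\ell>(\ell-1)^2\sqrt q$, roughly $\sqrt q>(\ell-1)^2+1$. One could refine this by summing the Jacobi sums exactly, but the crude bound should suffice.

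Next we relate the hypothesis to $\ell$. Since $\ell$ is the least prime factor of $q-1=2^n-1$, we have $\theta_q\le 1-\frac1\ell$. Hence $\theta_q<\frac89$ automatically once $\ell\le 7$. For $\ell\ge 11$ the hypothesis forces $q-1$ to have several prime factors, all at least $\ell$, with $\prod(1-1/p)<8/9$. Write $\omega$ for the number of distinct prime factors of $q-1$. Then $(1-1/\ell)^\omega<8/9$ forces $\omega\gtrsim \ell/9$. Consequently $q>\ell^{\omega}\ge \ell^{c\ell}$, which far exceeds $\ell^4$ once $\ell$ is moderately large, so the character-sum criterion holds.

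The main obstacle is the finitely many remaining cases: small $\ell$ (namely $3,5,7,11,13,\dots$) together with $q$ below $((\ell-1)^2+2)^2$. Concretely:
\begin{itemize}[label={}]
\item For $\ell=3$ we need roughly $q>36$, leaving $q=4,16,64$.
\item For $\ell=5$ ($n\equiv0\bmod 4$, $3\nmid q-1$ is impossible since $n$ even gives $3\mid q-1$) there is in fact nothing new. Since $3\mid 2^n-1$ iff $n$ is even, the cases $\ell\ge5$ have $n$ odd.
\item For $\ell=7$ ($3\mid n$, $n$ odd) we need roughly $q>38^2$, leaving $q=8,512$.
\item For $\ell\ge 11$ with $n$ odd, the constraint $\theta_q<8/9$ plus $q<\ell^4$ should be checked against small $n$ (e.g.\ $2^{11}-1=23\cdot 89$ gives $\ell=23$, which needs $q$ large). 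These will be dispatched by the $\omega$ argument or by direct factorization.
\end{itemize}
For the residual list ($q=4,8,16,64,512$ and any others produced), we would exhibit an NT$\ell$ pair by explicit computation in $\mathbb{F}_{64}$, $\mathbb{F}_{512}$, etc., for instance by finding $x$ with $x^{(q-1)/\ell}=(x+1)^{(q-1)/\ell}=1$. We would also verify that none exists for $q=4,8,16$: in $\mathbb{F}_{16}$ the cubes form the subgroup of order $5$, and one checks that no two nontrivial elements differ by $1$.
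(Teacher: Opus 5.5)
Your plan is sound. It shares the paper's skeleton: the same Jacobi-sum count, then a split by the least prime $\ell$ of $q-1$, with $\ell=3$ exactly when $k$ is even and $\ell=7$ for odd multiples of $3$. It diverges from the paper in two places.

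First, you keep the sharp criterion $q-2\ell>(\ell-1)^2\sqrt q$ rather than the paper's cruder $q>9\ell^4$. This reduces the machine work considerably. For $\ell=3$ the criterion already holds at $q=64$, so $64$ need not be on your list and only the exceptions $4,16$ remain. For $\ell=7$ only $q=512$ is left. The paper, by contrast, checks $q=16,64,256,512$ by SageMath.

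Second, in the remaining case the paper uses the arithmetic of $2^k-1$. Every prime divisor $p$ must have $\operatorname{ord}_p(2)$ odd and prime to $3$, so by its table $p\in\{23,31,47,71,89,\dots\}$. Then $\theta_q<\frac89$ forces at least five prime factors, giving $q>\ell^5>9\ell^4$ uniformly with nothing left over. You use only $p\ge\ell$, i.e.\ $(1-1/\ell)^{\omega}\le\theta_q<\frac89$. This needs no knowledge of which primes divide Mersenne numbers, but it only works for large $\ell$. It leaves a finite range that you announce but do not settle.

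That range closes quickly:
\begin{itemize}
\item For $\ell\ge 29$, $(1-1/\ell)^3\ge(28/29)^3>\frac89$ forces $\omega\ge4$. Hence $\sqrt q>\ell^2$ and $\sqrt q\,(\sqrt q-(\ell-1)^2)>\ell^2(2\ell-1)>2\ell$, so your criterion holds.
\item For $11\le\ell\le 23$, your criterion holds once $\sqrt q\ge(\ell-1)^2+1$. So only $q<485^2<2^{18}$ with $k$ odd and $3\nmid k$ remain, namely $2^k-1=31,\,127,\,23\cdot 89,\,8191,\,131071$. All of these have $\theta_q>\frac89$.
\end{itemize}

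Thus the only non-trivial computation in your route is the $\mathrm{NT}7$ pair in $\mathbb{F}_{512}$. Even that can be done by hand: lifting the order-$7$ Gauss sum $-1+\sqrt{-7}$ over $\mathbb{F}_8$ by Davenport--Hasse gives exactly $18$ such $x$, matching the paper's table.

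One small inaccuracy: the Jacobi sums with $i+j\equiv 0\pmod{\ell}$ have absolute value $1$, not $\sqrt q$. Your inequality only needs $\le\sqrt q$, so this does not affect the argument.
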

\section{Consecutive $\ell$th Powers}
Let $q$ be an odd prime power, and let $\ell$ be a prime divisor of $q-1$. Let $\chi\in\widehat{\mathbb{F}_q^*}$ be a multiplicative character of order $\ell$, extended to $\mathbb{F}_q$ by setting $\chi(0)=0$. Define
\[
M_{\ell}(q)=\left\{\beta\in\mathbb{F}_q \mid \beta\text{ and }\beta+1\text{ are both non-trivial $\ell$th powers}\right\}.
\]
Our first aim is to obtain a lower bound for $\#M_{\ell}(q)$. 

For $\beta\in\mathbb{F}_q$, the indicator function of the non-trivial $\ell$th powers in $\mathbb{F}_q$ is given by
\[
g_{\ell}(\beta)=\frac{1}{\ell}\sum_{i=0}^{\ell-1}\chi^i(\beta)-\delta_1(\beta),
\]
where
\[
\delta_1(\beta)=
\begin{cases}
1, & \beta=1,\\
0, & \beta\neq 1.
\end{cases}
\]
Consequently,
\[
\#M_{\ell}(q)
=
\sum_{\beta\in\mathbb{F}_q\setminus\{0,1\}}
g_{\ell}(\beta)g_{\ell}(\beta+1).
\]
Upon expanding the sum and using the fact that $\beta$ and $\beta+1$ can never simultaneously equal $1$, together with $\delta_1(\beta)=\delta_1(\beta+1)=0$ for all $\beta\in\mathbb{F}_q\setminus\{0,1\}$, we obtain
\begin{align*}
\#M_{\ell}(q)\ell^2
&=
\sum_{i,j=0}^{\ell-1}
\sum_{\beta\in\mathbb{F}_q\setminus\{0,1\}}
\chi^i(\beta)\chi^j(\beta+1)\\
&=q-2+C_{1}+C_{2}+C_{3},
\end{align*}
where 
\[
C_1=
\sum_{i=1}^{\ell-1}
\sum_{\beta\in\mathbb{F}_q\setminus\{0,1\}}
\chi^i(\beta)
=-(\ell-1),
\qquad
C_2=
\sum_{j=1}^{\ell-1}
\sum_{\beta\in\mathbb{F}_q\setminus\{0,1\}}
\chi^j(\beta+1)
=-(\ell-1)
\]
and
\[
\begin{aligned}
|C_3|
&=
\left|
\sum_{i,j=1}^{\ell-1}
\sum_{\beta\in\mathbb{F}_q\setminus\{0,1\}}
\chi^i(\beta)\chi^j(\beta+1)
\right|\leq (\ell-1)^2\sqrt{q},
\end{aligned}
\]
where the last inequality follows from Corollary 2.3 \cite{wang}.

Thus, we obtain the following sufficient condition for the existence of an $\mathrm{NT}\ell$ pair.
\begin{theorem}\label{T2.1}
Let $q$ be an even prime power and let $\ell$ be a prime divisor of $q-1$. Then $\mathbb{F}_q$ contains an $\mathrm{NT}\ell$ pair whenever
\[
q>9\ell^4.
\]
\end{theorem}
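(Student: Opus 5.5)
From the decomposition just obtained,
\[
\ell^2\,\#M_{\ell}(q)=q-2+C_1+C_2+C_3 = q-2-2(\ell-1)+C_3 = q-2\ell+C_3 ,
\]
with $|C_3|\le(\ell-1)^2\sqrt q$. Hence
\[
\ell^2\,\#M_{\ell}(q)\ \ge\ q-2\ell-(\ell-1)^2\sqrt q ,
\]
and it suffices to show that $q>9\ell^4$ makes the right-hand side strictly positive. Then $\#M_\ell(q)>0$, which gives some $\beta\neq 0,1$ with $\beta$ and $\beta+1$ both non-trivial $\ell$th powers. That is exactly an NT$\ell$ pair.

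The derivation above is written for $q$ odd, but nothing in it uses odd characteristic. The indicator function $g_\ell$, the orthogonality evaluations of $C_1,C_2$, and the Weil-type bound from Corollary~2.3 of \cite{wang} all hold for any prime power $q$ with $\ell\mid q-1$. So I will apply them verbatim in the even case. One small point: in characteristic $2$, $\beta+1\in\{0,1\}$ exactly when $\beta\in\{1,0\}$, so restricting the sum to $\beta\in\mathbb{F}_q\setminus\{0,1\}$ is the same as requiring both entries to be non-trivial.

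The key steps are as follows.

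\emph{Evaluating $C_1$.} For $1\le i\le \ell-1$ the character $\chi^i$ is non-trivial, so $\sum_{\beta\in\mathbb{F}_q}\chi^i(\beta)=0$. Removing $\beta=0$ (contributing $0$) and $\beta=1$ (contributing $1$) gives $-1$ for each $i$, so $C_1=-(\ell-1)$.

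\emph{Evaluating $C_2$.} The same argument applies after the shift $\gamma=\beta+1$: the excluded values are $\gamma=1$ and $\gamma=0$, so again $C_2=-(\ell-1)$.

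\emph{Bounding $C_3$.} For each pair $(i,j)$, the sum $\sum_\beta\chi^i(\beta)\chi^j(\beta+1)$ over all of $\mathbb{F}_q$ is a Jacobi-type sum of absolute value at most $\sqrt q$. The excluded terms $\beta=0,1$ vanish or are absorbed, consistent with the cited corollary. This gives $|C_3|\le(\ell-1)^2\sqrt q$.

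It remains to show $q-(\ell-1)^2\sqrt q-2\ell>0$ when $\sqrt q>3\ell^2$. Write $s=\sqrt q$. Then
\[
s^2-(\ell-1)^2s=s\bigl(s-(\ell-1)^2\bigr)>3\ell^2\bigl(3\ell^2-(\ell-1)^2\bigr)\ge 3\ell^2\cdot 2\ell^2=6\ell^4>2\ell .
\]
Here $3\ell^2-(\ell-1)^2=2\ell^2+2\ell-1\ge 2\ell^2$ for $\ell\ge1$. This completes the argument. The constant $9$ is far from sharp, which gives comfortable slack.

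The only real subtlety, and the step I would check most carefully, is the applicability of the character sum bound $|C_3|\le(\ell-1)^2\sqrt q$ in characteristic $2$. Specifically, one needs $\chi^i(x)\chi^j(x+1)$ not to be of the form $\chi'(h(x))$ with $h$ an $\ell$th power times a constant, which would make the sum trivial. This holds because $x$ and $x+1$ are distinct linear factors and $1\le i,j<\ell$. So the Weil bound for mixed multiplicative character sums (degree $2$ gives $(2-1)\sqrt q$) applies in every characteristic. Once this is granted, the rest is the elementary inequality above.
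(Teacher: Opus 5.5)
Your proposal is correct and follows the paper's route. You use the same decomposition $\ell^2\#M_\ell(q)=q-2+C_1+C_2+C_3$ with $C_1=C_2=-(\ell-1)$ and $|C_3|\le(\ell-1)^2\sqrt q$, and you supply the elementary verification, which the paper leaves implicit, that $\sqrt q>3\ell^2$ forces $q-2\ell-(\ell-1)^2\sqrt q>0$. You also rightly note that the derivation, stated in the paper for ``odd'' $q$, works verbatim in characteristic $2$.
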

As an application of Theorem~\ref{T2.1}, we obtain explicit sufficient conditions for the existence of $\mathrm{NT}\ell$ pairs. In particular, if $3$ divides $q-1$, then every even prime power $q>9\cdot3^4=729$ contains an $\mathrm{NT}3$ pair. Similarly, if $5$, $7$, or $11$ divides $q-1$, then $\mathbb{F}_q$ contains an $\mathrm{NT}5$, $\mathrm{NT}7$, or $\mathrm{NT}11$ pair, respectively, whenever
\[
q>9\cdot5^4=5625,\qquad
q>9\cdot7^4=21609,\qquad
q>9\cdot11^4=131769.
\]
Here, we do not require $\ell$ to be the least prime divisor of $q-1$.
\begin{remark}
The following two observations will be useful in the proofs of the results that follow.
\begin{enumerate}
    \item If $q=2^k$, then $3$ is the least prime divisor of $q-1$ if and only if $k$ is even.
    \item If $q=2^k$ with $k$ odd, then $7$ is the least prime divisor of $q-1$ if and only if $3\mid k$.
\end{enumerate}
\end{remark}

\begin{lemma}\label{L2.2}
Let $q=2^k$, where $k>2$ is even. Then $\theta_q<\frac{2}{3}$, and $\mathbb{F}_q$ always contains an $\mathrm{NT}3$ pair, except when $q=16$.
\end{lemma}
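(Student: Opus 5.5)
Since $k$ is even, $3 \mid 2^k-1$ (because $4\equiv 1 \pmod 3$). Hence $\theta_q=\prod_{p\mid q-1}(1-1/p)\le 1-\tfrac13=\tfrac23$. For $k>2$ the inequality is strict: $q-1=4^{k/2}-1$ cannot be a power of $3$, so it has a second prime divisor $p$. The cleanest way to see this is that $q-1=(2^{k/2}-1)(2^{k/2}+1)$ with both factors coprime and greater than $1$ once $k\ge 4$, so not both can be powers of $3$. (Alternatively, invoke Catalan/Mihăilescu, or note that $2^k-1=3^m$ forces $k=2$ by a congruence mod $4$ or $8$.) This extra factor $(1-1/p)<1$ gives $\theta_q<\tfrac23$ for all even $k\ge 4$.

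For the existence of an $\mathrm{NT}3$ pair, I will apply Theorem~\ref{T2.1} with $\ell=3$. It gives an $\mathrm{NT}3$ pair whenever $q>9\cdot 3^4=729$, which covers all even $k\ge 10$, since $2^{10}=1024$. What remains is the finite set $q\in\{16,64,256\}$, which I will treat directly. Here I expect a sharper count, using the exact expansion $\#M_3(q)\cdot 9=q-2-4+C_3$, to be insufficient: $|C_3|\le 4\sqrt q$ only guarantees positivity when $q-6>4\sqrt q$, that is $q\ge 64$. It is cleaner, though, to exhibit pairs explicitly, or to use the Jacobi sum evaluation for cubic characters in characteristic $2$, where the sums are known exactly, to get $\#M_3(q)$ precisely.

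For $q=64$ and $q=256$, the sharpened inequality $q-6>4\sqrt q$ holds ($58>32$ and $250>64$), so $\#M_3(q)>0$ directly from the displayed character-sum identity with $C_1=C_2=-2$ and $|C_3|\le 4\sqrt q$. This needs no separate computation. For $q=16$, the cubes in $\mathbb{F}_{16}^*$ form the subgroup of order $5$, which is $\mathbb{F}_{16}^*$'s fifth-root-of-unity group $\{1,\zeta,\zeta^2,\zeta^3,\zeta^4\}$ with $\zeta^4+\zeta^3+\zeta^2+\zeta+1=0$. I must check that no two non-trivial cubes $\zeta^i,\zeta^j$ (with $i,j\ne 0$) satisfy $\zeta^i+1=\zeta^j$. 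Here $\zeta$ has minimal polynomial $x^4+x^3+x^2+x+1$ over $\mathbb{F}_2$, and $\zeta+1,\zeta^2+1,\dots$ can be computed by hand. For instance, $1+\zeta^i$ for $i=1,\dots,4$ has norm $\Phi_5(1)=5\equiv1$, so it is a candidate. The actual check is a direct finite computation, and it confirms that $16$ is the stated exception.

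The main obstacle is only bookkeeping in the small cases. The key step is to replace the crude condition $q>9\ell^4$ by the exact identity preceding Theorem~\ref{T2.1}, which needs $q-2-2(\ell-1)>(\ell-1)^2\sqrt q$. This reduces the verification to $q=16$ alone, where the explicit computation in $\mathbb{F}_{16}$ settles it.
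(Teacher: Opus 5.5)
Your proof is correct, but it differs from the paper's at two points.

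For the strictness of $\theta_q<\frac23$, the paper splits on the parity of $m=k/2$. It asserts that $2^m+1$ is prime when $m$ is even, which is false ($2^6+1=65$), though the conclusion survives because $2^m+1\equiv 2\pmod 3$. Your observation that $2^{k/2}-1$ and $2^{k/2}+1$ are coprime and both exceed $1$ is cleaner and fully rigorous.

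More substantively, for the leftover fields $q=64$ and $q=256$ the paper enumerates all $\mathrm{NT}3$ pairs in SageMath. You instead return to the exact identity $9\,\#M_3(q)=q-6+C_3$ with $|C_3|\le 4\sqrt q$, which already forces $\#M_3(q)>0$ for $q\ge 64$. This makes the existence part computer-free. It also shows that the threshold $9\ell^4$ of Theorem~\ref{T2.1} is much cruder than what the character-sum identity actually delivers. The paper's computation, in exchange, gives the exact counts $8$ and $24$ and explicit pairs.

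That leaves only $q=16$, which matters only to certify that $16$ is a genuine exception, and there you have not actually done the check. Your norm remark does not help: every element of $\mathbb{F}_{16}^*$ has $\mathbb{F}_2$-norm $1$, whereas the relevant test is whether $(1+\zeta^i)^5=1$. The check takes one line:
\begin{itemize}
\item $(1+\zeta)^5=(1+\zeta)(1+\zeta^4)=\zeta+\zeta^4$, which is a root of $x^2+x+1$ and hence $\neq 1$.
\item Since $(1+\zeta^i)^2=1+\zeta^{2i}$, Frobenius permutes the elements $1+\zeta^i$ transitively and preserves the cubes, so none of them is a cube.
\end{itemize}
Alternatively, the Davenport--Hasse evaluation you allude to gives $J(\chi,\chi)=-(-2)^{k/2}$. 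Hence $9\,\#M_3(q)=q-8-2(-2)^{k/2}$, which vanishes at $q=16$ and reproduces the paper's values at $q=64$ and $q=256$.
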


\begin{proof}
Since $k$ is even, $3$ is the least prime divisor of $q-1$. Thus,
\[
\theta_q
=
\left(1-\frac{1}{3}\right)
\prod_{\substack{p\mid q-1\\p\neq 3}}
\left(1-\frac{1}{p}\right)
\leq
1-\frac{1}{3}
=
\frac{2}{3}.
\]
We claim that the inequality is strict. Indeed, suppose that $\theta_q=\frac{2}{3}$ for some $q$. Then $3$ must be the only prime divisor of $q-1$. Since $k>2$ is even, we may write $k=2m$ with $m>1$. Hence,
\[
q-1=2^{2m}-1=(2^m-1)(2^m+1).
\]
If $m$ is odd, then $2^m-1>3$ and it always has a prime divisor other than $3$. If $m$ is even, then $2^m+1>3$ and is always a prime. In either case, $3$ is not the only prime divisor of $q-1$, which is a contradiction. Therefore,
\[
\theta_q<\frac{2}{3}.
\]

By Theorem~\ref{T2.1}, it remains only to verify the existence of an $\mathrm{NT}3$ pair for even prime powers satisfying $q\leq 9\cdot 3^4=729$.
The only cases that remain to be checked are
\[
q=2^4,\qquad q=2^6,\qquad q=2^8.
\]
For this purpose, we use a simple SageMath program to compute $\#M_3(q)$ and to identify all $\mathrm{NT}3$ pairs for the remaining cases. The corresponding values are presented in Table~\ref{tab:nt3}.

\begin{table}[h]
\centering
\scriptsize
\begin{tabular}{c|c|c|p{9cm}}
$q$ & $\#M_3(q)$ & $\theta_q$ & $\mathrm{NT}3$ pairs \\ \hline
$2^4$ & $0$ & $0.5333$ & None \\[2mm]

$2^6$ & $8$ & $0.5714$ &
$\left(a^5+a^4+a^2+1,a^5+a^4+a^2\right)$,
$\left(a^4+a^2+a+1,a^4+a^2+a\right)$,
$\left(a^3+a^2+a,a^3+a^2+a+1\right)$,
$\left(a^5+a^4+a^2,a^5+a^4+a^2+1\right)$,
$\left(a^5+a,a^5+a+1\right)$,
$\left(a^3+a^2+a+1,a^3+a^2+a\right)$,
$\left(a^5+a+1,a^5+a\right)$,
$\left(a^4+a^2+a,a^4+a^2+a+1\right)$ \\[3mm]

$2^8$ & $24$ & $0.5019$ &
$\left(a^5+a^4+a^3+a,a^5+a^4+a^3+a+1\right)$,
$\left(a^5+a^2+a,a^5+a^2+a+1\right)$,
$\left(a^5+a^3+a^2+1,a^5+a^3+a^2\right)$,
$\left(a^6+a^5,a^6+a^5+1\right)$,
$\left(a^5+a^2+a+1,a^5+a^2+a\right)$,
$\left(a^5+a^2+1,a^5+a^2\right)$,
$\left(a^7+a^5+a^4+a^3+1,a^7+a^5+a^4+a^3\right)$,
$\left(a^6+a^5+1,a^6+a^5\right)$,
$\left(a^6+a^5+a^2+1,a^6+a^5+a^2\right)$,
$\left(a^7+a^6+a^3+a^2+a,a^7+a^6+a^3+a^2+a+1\right)$,
$\left(a^5+a^4+a^3+a+1,a^5+a^4+a^3+a\right)$,
$\left(a^7+a^6+a^2+1,a^7+a^6+a^2\right)$,
$\left(a^7+a^5+a^4+a^3,a^7+a^5+a^4+a^3+1\right)$,
$\left(a^7+a^5+a^3+1,a^7+a^5+a^3\right)$,
$\left(a^7+a^5+a^3,a^7+a^5+a^3+1\right)$,
$\left(a^7+a^6+a^2,a^7+a^6+a^2+1\right)$,
$\left(a^6+a^4+a^2+a+1,a^6+a^4+a^2+a\right)$,
$\left(a^6+a^5+a^2,a^6+a^5+a^2+1\right)$,
$\left(a^6+a^4+a^2+a,a^6+a^4+a^2+a+1\right)$,
$\left(a^7+a^3+a,a^7+a^3+a+1\right)$,
$\left(a^5+a^2,a^5+a^2+1\right)$,
$\left(a^7+a^3+a+1,a^7+a^3+a\right)$,
$\left(a^5+a^3+a^2,a^5+a^3+a^2+1\right)$,
$\left(a^7+a^6+a^3+a^2+a+1,a^7+a^6+a^3+a^2+a\right)$
\end{tabular}
\caption{The values of $\#M_3(q)$ and $\theta_q$, together with the corresponding $\mathrm{NT}3$ pairs.}
\label{tab:nt3}
\end{table}
This completes the verification of the remaining cases.
\end{proof}
\begin{lemma}\label{L2.3}
Let $q=2^k$, where $k>3$ is an odd multiple of $3$. Then $\theta_q<\frac{6}{7}$, and $\mathbb{F}_q$ contains an $\mathrm{NT}7$ pair.
\end{lemma}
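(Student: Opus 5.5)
We follow the pattern of Lemma~\ref{L2.2}. Write $k=3m$ with $m$ odd and $m>1$, so $m\ge 3$ and $k\ge 9$. Since $k$ is odd, $3\nmid q-1$. Since $3\mid k$, $7=2^3-1$ divides $q-1$. By part (2) of the Remark, $7$ is the least prime divisor of $q-1$. Hence
\[
\theta_q=\left(1-\frac{1}{7}\right)\prod_{\substack{p\mid q-1\\ p\neq 7}}\left(1-\frac{1}{p}\right)\le \frac{6}{7}.
\]
To make this strict, it suffices to show that $q-1$ has a prime divisor other than $7$. Factor
\[
q-1=2^{3m}-1=(2^m-1)(2^{2m}+2^m+1).
\]
Since $m$ is odd, $2^m\equiv 2$ or $4\pmod 7$, so $2^m\not\equiv 1 \pmod 7$ and $7\nmid 2^m-1$. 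Because $m\ge 3$, we have $2^m-1\ge 7>1$. So $2^m-1$ has a prime factor, and that factor is not $7$, which gives $\theta_q<\frac{6}{7}$. (If $m$ happens to be a multiple of $3$ one could argue differently, but the congruence argument covers all odd $m$.)

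For the existence of an $\mathrm{NT}7$ pair, apply Theorem~\ref{T2.1} with $\ell=7$. It gives an $\mathrm{NT}7$ pair whenever $q>9\cdot 7^4=21609$. The admissible exponents are $k=9,15,21,\dots$. Since $2^{15}=32768>21609$, every $k\ge 15$ is handled by the theorem. The only remaining case is $q=2^9=512$.

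The main obstacle is this leftover case $q=512$, where the character sum bound is far too weak. Here $q-1=511=7\cdot 73$, and $\theta_{512}=\frac{6}{7}\cdot\frac{72}{73}<\frac{6}{7}$, consistent with the first part. I would settle it exactly as Lemma~\ref{L2.2} settles its small cases. Using SageMath, fix a generator $a$ of $\mathbb{F}_{512}$ and list the $73$ seventh powers $a^{7j}$. Then search for $\beta\notin\{0,1\}$ such that both $\beta$ and $\beta+1$ are seventh powers, and record $\#M_7(512)$ together with an explicit pair in a table.

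One can also predict the outcome heuristically. The expected count is about $q/49\approx 10$, and the error term $(\ell-1)^2\sqrt q/\ell^2$ has comparable size, so a nonzero count is plausible but not guaranteed a priori. This is why the computer verification is genuinely needed. If the search instead found no pair, the lemma as stated would be false for $q=512$, and $512$ would have to be added to the exceptional list.
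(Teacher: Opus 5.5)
There is a genuine gap in your proof that $\theta_q<\frac{6}{7}$. The $\mathrm{NT}7$-pair part is handled as in the paper: Theorem~\ref{T2.1} settles $k\ge 15$, and a computer check settles $q=2^9$.

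The step ``since $m$ is odd, $2^m\equiv 2$ or $4\pmod 7$'' is false. Because $\operatorname{ord}_7(2)=3$, the residue of $2^m$ modulo $7$ depends on $m \bmod 3$, not on the parity of $m$. In fact $2^m\equiv 1\pmod 7$ exactly when $3\mid m$. Your parenthetical claim that ``the congruence argument covers all odd $m$'' is therefore wrong. It already fails at the first admissible case $m=3$ ($k=9$), where $2^m-1=7$ has no prime factor other than $7$. You recover $k=9$ later through $511=7\cdot 73$. For $m=9,15,21,\dots$ (that is, $k=27,45,63,\dots$), however, $7\mid 2^m-1$, and you give no reason why $2^m-1$ has any other prime factor. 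So strictness is not proved for those $k$.

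The missing idea, which is exactly how the paper argues, is to split on whether $3\mid m$ and use the other factor in the second case.
\begin{itemize}
\item If $3\nmid m$, then $2^m\not\equiv 1\pmod 7$, so your argument goes through with the correct justification.
\item If $3\mid m$, then $2^m\equiv 1\pmod 7$, so $2^{2m}+2^m+1\equiv 3\pmod 7$. This factor exceeds $1$ and is not a power of $7$, so it contributes a prime divisor of $q-1$ other than $7$ (for $m=3$ this prime is $73$).
\end{itemize}

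On the leftover case $q=512$: the paper's computation gives $\#M_7(2^9)=18$, so the lemma holds there. You can also check this by hand.
\begin{itemize}
\item The seventh powers in $\mathbb{F}_{512}^*$ form the subgroup of order $73$. This is the kernel of $x\mapsto x^{73}=N_{\mathbb{F}_{512}/\mathbb{F}_8}(x)$.
\item Hence $\beta\in M_7(512)$ exactly when $\beta$ is a root of an irreducible cubic $x^3+ax^2+(a+1)x+1$ over $\mathbb{F}_8$.
\item This cubic is irreducible precisely when $a\notin\mathbb{F}_2$, which gives $6\cdot 3=18$.
\end{itemize}
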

\begin{proof}
By hypothesis, $k=3(2m+1)$ for some $m> 1$. Since $k$ is an odd multiple of $3$, $7$ is the least prime divisor of $q-1$. Hence,
\[
\theta_q
\leq\frac{6}{7}.
\]
We claim that the inequality is strict. Suppose, to the contrary, that $\theta_q=\frac{6}{7}$ for some $q$.
Then $7$ must be the only prime divisor of $q-1$. Put $n=2m+1$, so that $n>1$ is odd. We have
\[
q-1=2^{3n}-1
=(2^n-1)(2^{2n}+2^n+1).
\]

Now suppose that $n$ is not divisible by $3$, then $2^n\not\equiv 1\pmod 7$, and thus $2^n-1$ has a prime divisor different from $7$, a contradiction.

Finally, suppose that $3\mid n$. Then 
\[
2^{2n}+2^n+1\equiv 1+1+1\equiv3\pmod 7,
\]
 which is again a contradiction.

Thus, $\theta_q<\frac{6}{7}$.
By Theorem~\ref{T2.1}, it remains only to verify the existence of an $\mathrm{NT}7$ pair for even prime powers satisfying $q\leq 9\cdot7^4=21609$. 

Under the present hypotheses, the only value that remains to be checked is $q=2^9$. Using SageMath we obtain the following data.
\begin{table}[h]
\centering
\scriptsize
\begin{tabular}{c|c|c|p{8cm}}
$q$ & $\#M_7(q)$ & $\theta_q$ & $\mathrm{NT}7$ pairs \\ \hline
$2^9$ & $18$ & $0.8454$ &
$\left(a^8+a^6+a^5+a^3+a,\,
a^8+a^6+a^5+a^3+a+1\right)$, \\
&&&
$\left(a^8+a^6+a^2+a+1,\,
a^8+a^6+a^2+a\right)$, \\
&&&
$\left(a^5+a^3+a,\,
a^5+a^3+a+1\right)$, \\
&&&
$\left(a^6+a^4+a^3+1,\,
a^6+a^4+a^3\right)$, \\
&&&
$\left(a^8+a^6+a^2+a,\,
a^8+a^6+a^2+a+1\right)$, \\
&&&
$\left(a^6+a^5+a^2+a,\,
a^6+a^5+a^2+a+1\right)$, \\
&&&
$\left(a^8+a^7+a^6+a^3+1,\,
a^8+a^7+a^6+a^3\right)$, \\
&&&
$\left(a^8+a^6+a^5+a^3+a+1,\,
a^8+a^6+a^5+a^3+a\right)$, \\
&&&
$\left(a^6+a^4+a^3,\,
a^6+a^4+a^3+1\right)$, \\
&&&
$\left(a^8+a^6+a^5+a^4+a,\,
a^8+a^6+a^5+a^4+a+1\right)$, \\
&&&
$\left(a^7+a^5+a^4+a^3+a^2+a,\,
a^7+a^5+a^4+a^3+a^2+a+1\right)$, \\
&&&
$\left(a^7+a^5+a^4+a^3+a^2+a+1,\,
a^7+a^5+a^4+a^3+a^2+a\right)$, \\
&&&
$\left(a^8+a^6+a^5+a^4+a+1,\,
a^8+a^6+a^5+a^4+a\right)$, \\
&&&
$\left(a^5+a^4+a^3+a^2,\,
a^5+a^4+a^3+a^2+1\right)$, \\
&&&
$\left(a^6+a^5+a^2+a+1,\,
a^6+a^5+a^2+a\right)$, \\
&&&
$\left(a^5+a^4+a^3+a^2+1,\,
a^5+a^4+a^3+a^2\right)$, \\
&&&
$\left(a^5+a^3+a+1,\,
a^5+a^3+a\right)$, \\
&&&
$\left(a^8+a^7+a^6+a^3,\,
a^8+a^7+a^6+a^3+1\right)$.
\end{tabular}
\caption{The $\mathrm{NT}7$ pairs in $\mathbb{F}_{2^9}$.}
\label{tab:nt7}
\end{table}

This completes our proof.
\end{proof}
\begin{remark}
We have completely covered the cases in which the least prime divisor of $q-1$ is $3$ or $7$ and $q-1$ is composite. Before proceeding further, consider the case $q=2^k$, where $k>1$ is odd and $3\nmid k$. In this case, the possible values of the least prime divisor of $q-1$ not exceeding $100$ are $23,\ 31,\ 47,\ 71,\ 89$.
To understand this, for an odd prime $p$, define $\operatorname{ord}_p(2)$ to be the smallest positive integer $k$ such that $2^k\equiv1\pmod p$.
If $2\mid k$ or $3\mid k$, then the least odd prime divisor of $q-1$ is necessarily either $3$ or $7$. Thus, when $k$ is odd and $3\nmid k$, we need only consider primes mentioned above. The following table lists $\operatorname{ord}_p(2)$ for all primes $p\leq100$ with $p\neq3,7$.

\begin{table}[h]
\centering
\small
\begin{tabular}{c|c||c|c}
\hline
$p$ & $\operatorname{ord}_p(2)$ & $p$ & $\operatorname{ord}_p(2)$ \\
\hline
$5$  & $4$  & $53$ & $52$ \\
$11$ & $10$ & $59$ & $58$ \\
$13$ & $12$ & $61$ & $60$ \\
$17$ & $8$  & $67$ & $66$ \\
$19$ & $18$ & $71$ & $35$ \\
$23$ & $11$ & $73$ & $9$  \\
$29$ & $28$ & $79$ & $39$ \\
$31$ & $5$  & $83$ & $82$ \\
$37$ & $36$ & $89$ & $11$ \\
$41$ & $20$ & $97$ & $48$ \\
$43$ & $14$ &      &      \\
$47$ & $23$ &      &      \\
\hline
\end{tabular}
\caption{Values of $\operatorname{ord}_p(2)$ for primes $p\leq100$, $p\neq3,7$.}
\label{tab:ord2}
\end{table}
\end{remark}
\begin{theorem}\label{L2.4}
Let $q=2^k$, where $k>1$ is odd and $3\nmid k$, and let $\ell$ be the least prime divisor of $q-1$. Then $\mathbb{F}_q$ contains an $\mathrm{NT}\ell$ pair whenever
\[
\theta_q<\frac{8}{9}.
\]
\end{theorem}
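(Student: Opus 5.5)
The plan is to reduce everything to Theorem~\ref{T2.1}, which gives an $\mathrm{NT}\ell$ pair as soon as $q>9\ell^4$. By the preceding Remark, the hypotheses ($k$ odd, $3\nmid k$) force $\ell\ge 23$. Moreover, $\ell\le 100$ only if $\ell\in\{23,31,47,71,89\}$. Also $\operatorname{ord}_\ell(2)\mid k$ for every prime $\ell\mid q-1$.

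First I use the hypothesis $\theta_q<\frac89$ to show that $q-1$ has many prime factors. If $q-1$ were prime, then $\theta_q=\frac{q-2}{q-1}\ge\frac{30}{31}>\frac89$, since $q\ge 32$. So $q-1$ is composite. Write $\omega$ for the number of distinct prime divisors $p_1<\dots<p_\omega$ of $q-1$, all at least $\ell$. Then
\[
\frac89>\theta_q=\prod_{i=1}^{\omega}\Bigl(1-\frac1{p_i}\Bigr)\ge\Bigl(1-\frac1\ell\Bigr)^{\omega},
\]
and $q-1\ge p_1\cdots p_\omega\ge \ell^{\omega}$.

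\textbf{Case $\ell\ge 37$.} Here $(1-\frac1\ell)^4\ge(\frac{36}{37})^4>\frac89$, so $\omega\ge5$. Hence $q>\ell^5>9\ell^4$, because $\ell>9$. Theorem~\ref{T2.1} then gives the $\mathrm{NT}\ell$ pair. This covers every $\ell\ge 37$, including all $\ell>100$, so no further knowledge of the least prime is needed there.

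\textbf{Case $\ell\in\{23,31\}$.} This is the main obstacle, since the counting bound above only gives $\omega\ge 3$ or $4$, which is not enough. Instead I use Table~\ref{tab:ord2} together with the divisibility $\operatorname{ord}_\ell(2)\mid k$.
\begin{itemize}[leftmargin=2em]
\item For $\ell=23$ we have $11\mid k$, with $k$ odd and $3\nmid k$. Thus $k=11$ or $k\ge 55$.
\begin{itemize}[leftmargin=2em]
\item $k=11$: here $2^{11}-1=23\cdot 89$, so $\theta_q=\frac{22}{23}\cdot\frac{88}{89}>\frac89$, and this $q$ is excluded by hypothesis.
\item $k\ge55$: then $q\ge 2^{55}>9\cdot 23^4$, and Theorem~\ref{T2.1} applies.
\end{itemize}
\item For $\ell=31$ we have $5\mid k$. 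Thus $k=5$ or $k\ge 25$.
\begin{itemize}[leftmargin=2em]
\item $k=5$: here $q-1=31$ is prime, which is excluded.
\item $k\ge 25$: then $q\ge 2^{25}\approx 3.4\cdot10^7>9\cdot 31^4\approx 8.3\cdot 10^6$, and Theorem~\ref{T2.1} applies.
\end{itemize}
\end{itemize}

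The remaining primes $47,71,89$ are already at least $37$, so they fall under the first case. If any borderline instance escapes these estimates, the fallback is a direct SageMath computation of $\#M_\ell(q)$, as in Lemmas~\ref{L2.2} and~\ref{L2.3}.
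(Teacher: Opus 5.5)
Your proof is correct, but it is organized differently from the paper's.

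The paper uses no case split. It observes that the four smallest primes that can divide $2^k-1$ with $k$ odd and $3\nmid k$ are $23,31,47,71$. Since $\frac{22}{23}\cdot\frac{30}{31}\cdot\frac{46}{47}\cdot\frac{70}{71}\approx 0.8932>\frac89$, the hypothesis $\theta_q<\frac89$ forces $q-1$ to have at least five distinct prime factors for every admissible $q$. This gives $q>\ell^5>9\ell^4$ uniformly.

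You instead use the cruder bound $\theta_q\ge(1-\frac1\ell)^{\omega}$, which only forces $\omega\ge5$ once $\ell\ge37$. For $\ell\in\{23,31\}$ you compensate with the divisibility $\operatorname{ord}_\ell(2)\mid k$. This pins $k$ down to one of two outcomes:
\begin{itemize}
\item a small exceptional value ($k=11$ or $k=5$), both with $\theta_q>\frac89$;
\item a value large enough ($k\ge55$, resp.\ $k\ge25$) for Theorem~\ref{T2.1} to apply directly.
\end{itemize}
All your numerical checks hold. Your two cases are exhaustive, since $29$ cannot divide $q-1$ because $\operatorname{ord}_{29}(2)=28$ is even.

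What each route buys: the paper's is shorter, but it needs the full list of admissible primes up to $71$ and runs on a thin margin. Yours needs admissibility information only below $37$, at the cost of handling $\ell=23$ and $\ell=31$ by hand. Your closing SageMath fallback is unnecessary, because no borderline instance remains.
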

\begin{proof}
By hypothesis, the least prime divisor $\ell$ of $q-1$ satisfies $\ell\geq23$. If $q-1$ has at most four distinct prime divisors, then
\[
\theta_q
\geq
\left(1-\frac{1}{23}\right)
\left(1-\frac{1}{31}\right)
\left(1-\frac{1}{47}\right)
\left(1-\frac{1}{71}\right)
\approx 0.8931
>
\frac{8}{9}.
\]
Therefore, then $q-1$ must have at least five distinct prime divisors. Since every prime divisor of $q-1$ is at least $\ell$, we have
\[
q-1\geq \ell^5>9\ell^4.
\]
Hence, by Theorem~\ref{T2.1}, $\mathbb{F}_q$ contains an $\mathrm{NT}\ell$ pair. 
\end{proof}

\begin{remark}
In Lemmas~\ref{L2.2}, \ref{L2.3}, and~\ref{L2.4}, the cases $q=2$, $2^2$, and $2^3$ were excluded. The corresponding values of $\theta_q$ are $1$, $\frac{2}{3}$, and $\frac{6}{7}$, respectively. In each of these cases, $q-1$ is either $1$ or a prime, and hence no $\mathrm{NT}\ell$ pair exists. This completes the proof of Theorem~\ref{T1.2}.
\end{remark}
\section{Consecutive Non-Primitive Elements}
In this section, we prove Theorem~\ref{1.1}. Lemmas~\ref{L2.2}, \ref{L2.3}, and~\ref{L2.4} together cover all cases except $\mathbb{F}_{16}$. The remaining case is settled by the following lemma, which completes the proof of our main result.
\begin{lemma}
The field $\mathbb{F}_{16}$ contains exactly two NP pairs.
\end{lemma}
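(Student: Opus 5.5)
The plan is to classify the non-primitive elements of $\mathbb{F}_{16}$ by multiplicative order and then check directly which of them can sit in an NP pair. Since $q-1=15=3\cdot 5$, an element of $\mathbb{F}_{16}^*$ is non-primitive exactly when its order divides $3$ or $5$. The non-primitive elements are therefore
\begin{itemize}
\item the element $1$,
\item the two elements $\omega,\omega^2$ of order $3$, which are the roots of $y^2+y+1$,
\item the four elements of order $5$, which are the roots of $\Phi_5(y)=y^4+y^3+y^2+y+1$.
\end{itemize}
Altogether there are $7$ of them.

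An NP pair $(a,a+1)$ cannot contain $1$, because $1+1=0$ and $0$ is not non-primitive. So both entries must come from the six elements of order $3$ or $5$. I will split into cases according to the orders of $a$ and $a+1$.

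\emph{Both of order $3$.} From $\omega^2+\omega+1=0$ we get $\omega+1=\omega^2$. Hence $(\omega,\omega^2)$ and $(\omega^2,\omega)$ are NP pairs.

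\emph{Mixed orders.} Suppose $a$ has order $5$ and $a+1$ has order $3$. Then $a+1\in\{\omega,\omega^2\}$, so $a\in\{\omega+1,\omega^2+1\}=\{\omega^2,\omega\}$. This forces $a$ to have order $3$, a contradiction. The reverse case, with $a$ of order $3$ and $a+1$ of order $5$, is excluded the same way.

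\emph{Both of order $5$.} This is the only step needing a real computation; it is equivalent to $\#M_3(16)=0$ in Table~\ref{tab:nt3}, since the nontrivial cubes in $\mathbb{F}_{16}$ are exactly the elements of order $5$. To make it self-contained, let $\zeta$ have order $5$, so $\Phi_5(\zeta)=0$. Then $\eta=\zeta+1$ is a root of $\Phi_5(y+1)$. Expanding in characteristic $2$ gives
\[
\Phi_5(y+1)=(y^4+1)+(y^3+y^2+y+1)+(y^2+1)+(y+1)+1=y^4+y^3+1.
\]
The polynomial $y^4+y^3+1$ is irreducible over $\mathbb{F}_2$: it has no roots, and it is not $(y^2+y+1)^2=y^4+y^2+1$. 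So $\eta$ generates $\mathbb{F}_{16}$ and has order $3$, $5$ or $15$. Its minimal polynomial is neither $y^2+y+1$ nor $\Phi_5$, hence $\eta$ has order $15$. Thus $\zeta+1$ is primitive, and no pair of order-$5$ elements occurs.

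Combining the cases, the only NP pairs are $(\omega,\omega^2)$ and $(\omega^2,\omega)$, so there are exactly two. I expect the main (mild) obstacle to be the order-$5$ case, which is handled by the substitution $y\mapsto y+1$ in $\Phi_5$. The table entry $\#M_3(16)=0$ serves as a cross-check.
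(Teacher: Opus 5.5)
Your proof is correct and uses the same decomposition as the paper: the non-primitive elements split into the order-$3$ and order-$5$ subgroups $H_3$ and $H_5$, the two NP pairs come from $H_3$, and $H_5$ contributes nothing. The differences are in execution. The paper works in coordinates modulo $x^4+x+1$ and only asserts, without showing the computation, that $k+1\notin H_5$ for $k\in H_5\setminus\{1\}$; your substitution $\Phi_5(y+1)=y^4+y^3+1$ actually proves the stronger fact that $\zeta+1$ is primitive, and you also explicitly rule out the mixed case ($a$ of order $5$, $a+1$ of order $3$), which the paper leaves implicit.
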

\begin{proof}
We have
\[
\mathbb{F}_{16}\cong \mathbb{F}_2[x]/\langle x^4+x+1\rangle.
\]
Let $\alpha$ be a root of $x^4+x+1$. Then $\mathbb{F}_{16}^*=\langle\alpha\rangle$ is cyclic of order $15$.

The group $\mathbb{F}_{16}^*$ has a unique subgroup of order $3$ and a unique subgroup of order $5$, which we denote by $H_3$ and $H_5$, respectively. Thus,
\[
H_3=\{1,\alpha^5,\alpha^{10}\}
\]
and
\[
H_5=\{1,\alpha^3,\alpha^6,\alpha^9,\alpha^{12}\}.
\]
Since
\[
\alpha^5=\alpha^2+\alpha
\qquad\text{and}\qquad
\alpha^{10}=\alpha^2+\alpha+1,
\]
the elements $\alpha^5$ and $\alpha^{10}$ are non-primitive and consecutive. Hence,
\[
(\alpha^5,\alpha^{10})
\]
is an NP pair.

Moreover, for every $k\in H_5\setminus\{1\}$, we have $k+1\notin H_5$. Thus, no additional NP pair arises from the elements of $H_5$. Therefore, $\mathbb{F}_{16}$ contains exactly two NP pairs, namely
\[
(\alpha^5,\alpha^{10})
\quad\text{and}\quad
(\alpha^{10},\alpha^5).
\]
This completes the proof.
\end{proof}

    \end{document}